\newcommand\cyr{%
\renewcommand\rmdefault{wncyr}%
\renewcommand\sfdefault{wncyss}%
\renewcommand\encodingdefault{OT2}%
\normalfont
\selectfont}
\DeclareTextFontCommand{\textcyr}{\cyr}
\DeclareFontFamily{OT1}{rsfs}{}
\DeclareFontShape{OT1}{rsfs}{n}{it}{<-> rsfs10}{}
\DeclareMathAlphabet{\mathscr}{OT1}{rsfs}{n}{it}
\numberwithin{equation}{section}
\newtheorem{theorem}{Theorem}[section]
\newtheorem{lemma}[theorem]{Lemma}
\newtheorem{proposition}[theorem]{Proposition}
\newtheorem{corollary}[theorem]{Corollary}
\theoremstyle{definition}
\newtheorem{remark}[theorem]{Remark}
\theoremstyle{remark}
\newtheorem{acknowledgement}{Acknowledgement}
\begin{document}
\title[On the absolute integral closure in positive characteristic]{On the vanishing of local cohomology
of \\the absolute integral closure in positive characteristic}

\author[Pham Hung Quy]{Pham Hung Quy}
\address{Department of Mathematics, FPT University, Hoa Lac Hi-Tech Park, Ha Noi, Viet Nam}
\email{quyph@fpt.edu.vn}

\thanks{2010 {\em Mathematics Subject Classification\/}: 13A35; 13D45; 13B40; 13D22; 13H10; 14B15.\\
This work is partially supported by a fund of Vietnam National Foundation for Science
and Technology Development (NAFOSTED) under grant number
101.04-2014.25.}

\keywords{Absolute integral closure; Local cohomology; Big Cohen-Macaulay; Characteristics $p$.}


\maketitle

\begin{abstract}
The aim of this paper is to extend the main result of C. Huneke and G. Lyubeznik in [Adv. Math.  210 (2007), 498--504] to the class of rings that are images of Cohen-Macaulay local rings. Namely, let $R$ be a local Noetherian domain of positive characteristic that is an image of a Cohen-Macaulay local ring. We prove that all local cohomology of $R$ (below the dimension) maps to zero in a finite extension of the ring. As a direct consequence we obtain that the absolute integral closure of $R$ is a big Cohen-Macaulay algebra. Since every excellent local ring is an image of a Cohen-Macaulay local ring, this result is a generalization of the main result of M. Hochster and Huneke in [Ann. of Math. 135 (1992), 45--79] with a simpler proof.
\end{abstract}

\section{Introduction}
Let $(R,\frak m)$ be a commutative Noetherian local domain with fraction field $K$. The {\it absolute integral closure} of $R$, denoted $R^+$, is the integral closure of $R$ in a fixed algebraic closure $\overline{K}$ of $K$.\\

A famous result of M. Hochster and C. Huneke says that if $(R, \frak m)$ is an excellent local Noetherian domain of positive characteristic $p > 0$, then $R^+$ is a (balanced) big Cohen-Macaulay algebra, i.e. every system of parameters in $R$ becomes a regular sequence in $R^+$ (cf. \cite{HH92}). Furthermore, K.E. Smith in \cite{Sm94} proved that the tight closure of an ideal generated by parameters is the contraction of its extension in $R^+$: $I^* = IR^+ \cap R$. This property is not true for every ideal $I$ in an excellent Noetherian domain since tight closure does not commute with localization (cf. \cite{BM10}).\\

As mentioned above, $H^i_{\frak m}(R^+) = 0$ for all $i< \dim R$ provided $R$ is an excellent local Noetherian domain of positive characteristic. Hence, the natural homomorphism $H^i_{\frak m}(R) \to H^i_{\frak m}(R^+)$, induced from the inclusion $R \to R^+$, is the zero map for all $i< \dim R$. In the case $R$ is an image of a Gorenstein (not necessarily excellent) local ring, as the main result of \cite{HL07}, Huneke and G. Lyubeznik proved a stronger conclusion that one can find a finite extension ring $S$, $R \subseteq S \subseteq R^+$, such that the natural map $H^i_{\frak m}(R) \to H^i_{\frak m}(S)$ is zero for all $i < \dim R$. Therefore, they obtained a simpler proof for the result of Hochster and Huneke in the cases where the assumptions overlap, e.g., for complete Noetherian local domain. The techniques used in \cite{HL07} are the Frobenius action on the local cohomology, (modified) equation lemma (cf. \cite{HH92}, \cite{Sm94}, \cite{HL07}) and the local duality theorem (This is the reason of the assumption that $R$ is an image of a Gorenstein local ring). The motivation of the present paper is our belief: {\it If a result was shown by the local duality theorem, then it can be proven under the assumption that the ring is an image of a Cohen-Macaulay local ring} (for example, see \cite{NQ14}). The main result of this paper extends Huneke-Lyubeznik's result to the class of rings  that are images of Cohen-Macaulay local rings. Namely, we prove the following.

\begin{theorem}\label{T1.1} Let $(R, \frak m)$ be a commutative Noetherian local domain containing a filed of positive characteristic $p$. Let $K$ be the fraction field of $R$ and $\overline{K}$ an algebraic closure of $K$. Assume that $R$ is an image of a Cohen-Macaulay local ring. Let $R'$ be an $R$-subalgebra of $\overline{K}$ (i.e. $R \subseteq R' \subseteq \overline{K}$) that is a finite $R$-module. Then there is an $R'$-subalgebra $R''$ of $\overline{K}$ (i.e. $R' \subseteq R'' \subseteq \overline{K}$) that is finite as an $R$-module such that the natural map $H^i_{\frak m}(R') \to H^i_{\frak m}(R'')$ is the zero map for all $i < \dim R$.
\end{theorem}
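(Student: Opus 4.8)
The plan is to combine the Frobenius action on local cohomology with a Cohen-Macaulayness argument that replaces the use of local duality in \cite{HL07}. First I would pass to the completion and choose a Cohen-Macaulay local ring $(A,\fn)$ with a surjection $A \twoheadrightarrow R$; one may take $A$ to be complete, hence a homomorphic image of a regular local ring, and by adjoining the $p$-th roots needed to define the Frobenius we may assume $A$ contains a field. Write $d = \dim R$ and fix $i < d$. The key point is that $H^i_{\fm}(R') \cong H^{i}_{\fm}(R') $ carries a natural Frobenius action $F$ induced from the $p$-th power map on $R'$ (here I use that $R'$ is a domain of characteristic $p$), and every element of $H^i_{\fm}(R')$ is killed by a power of $\fm$, so the submodule it generates under $F$ and $A$-multiplication is finitely generated.

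Next I would invoke the equation lemma in the form used in \cite{HH92,Sm94,HL07}: given a class $\eta \in H^i_{\fm}(R')$ that is nilpotent under Frobenius (i.e. $F^e(\eta) = 0$ for some $e$), there is a finite extension $R' \subseteq S \subseteq \overline K$ in which $\eta$ maps to zero. So the whole problem reduces to showing that $H^i_{\fm}(R')$ becomes Frobenius-nilpotent after a finite extension, for each $i < d$ simultaneously. To get there, the strategy is to realize $H^i_{\fm}(R')$, via the Cohen-Macaulay ring $A$, as a subquotient that can be analyzed by a regular sequence. Concretely, choose a system of parameters $x_1,\dots,x_d$ of $R$ lifting to a regular sequence in $A$; then $H^i_{\fm}(R') = H^i_{(\ux)}(R')$ is computed by the Čech complex on $\ux$, and since $A$ is Cohen-Macaulay one has good control of the syzygies of $\ux$ on the finite $A$-module $R'$. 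The heart of the argument is a Nagata-type/Noetherian induction: the modules $\{F^e(H^i_{\fm}(R'))\}_e$ form a descending chain, and one shows that passing to a suitable finite extension $R''$ makes the image of $H^i_{\fm}(R')$ in $H^i_{\fm}(R'')$ land inside $\bigcap_e F^e$, which one then identifies as $0$ using that $R''$ is a domain module-finite over the Cohen-Macaulay ring $A$ — this is where "image of Cohen-Macaulay" does the work that Gorenstein-plus-local-duality did before.

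The main obstacle I anticipate is precisely this last identification: without local duality one cannot simply dualize $H^i_{\fm}(R')$ to a finitely generated module and argue there, so one must argue directly on the local cohomology. I expect the technical crux to be showing that the finitely generated (over the ring $A\{F\}$ of Frobenius operators) module generated by a given class becomes nilpotent after a module-finite domain extension; the Cohen-Macaulay hypothesis on $A$ should enter through an exact-sequence/depth argument (e.g. realizing $R'$ inside a finite free-ish resolution over $A$, or splitting off the relevant cohomology) that forces the "non-nilpotent part" of $H^i_{\fm}$ for $i<d$ to vanish. A secondary technical point is to handle all $i < d$ at once and to ensure the finitely many extensions can be amalgamated inside the fixed $\overline K$ into a single module-finite $R''$; this is routine since $\overline K$ is a field and each extension is generated by finitely many algebraic elements.
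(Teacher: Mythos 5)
There is a genuine gap, and it sits exactly where you flag the ``main obstacle'': the step that makes the image of $H^i_{\fm}(R')$ small enough for the equation lemma to apply is never carried out, and the reduction you propose is not the right one. The equation lemma of \cite{HL07} does not require Frobenius nilpotence of a class; it applies to an element lying in an $F$-stable submodule of the local cohomology of a module-finite domain extension that is \emph{finitely generated as an $R$-module}. So the correct intermediate goal is to produce a finite extension $R^*$ with $\ell\bigl(\im(H^i_{\fm}(R')\to H^i_{\fm}(R^*))\bigr)<\infty$, not to make $H^i_{\fm}(R')$ Frobenius-nilpotent. Your proposed mechanism for this --- a ``Nagata-type induction'' on the chain $\{F^e(H^i_{\fm}(R'))\}_e$, identified as zero ``using that $R''$ is a domain module-finite over the Cohen--Macaulay ring $A$'' --- is not an argument: $H^i_{\fm}(R')$ is Artinian rather than Noetherian, $F$ is only $p$-linear, and no reason is given why the intersection should vanish or how Cohen--Macaulayness of $A$ enters. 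A further structural problem is that you complete at the outset: $\widehat R$ need not be a domain, while the whole construction of finite extensions must take place inside the fixed $\overline K$ over $R$ itself.

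For comparison, the paper's proof is an induction on $d=\dim R$ organized by the attached primes of the Artinian modules $H^i_{\fm}(R)$: for each $i<d$ and each non-maximal $\fp\in\Att_R H^i_{\fm}(R)$ one applies the inductive hypothesis to $R_{\fp}$, clears denominators so the new generators are integral over $R$, and amalgamates the finitely many resulting extensions into a single finite extension $R^*$. Proposition \ref{P3.1} then shows that killing the localized maps at every non-maximal attached prime forces the image of $H^i_{\fm}(R')\to H^i_{\fm}(R^*)$ to have finite length; that image is $F$-stable, so the equation lemma finishes the proof. The hypothesis that $R$ is an image of a Cohen--Macaulay local ring is used only inside Proposition \ref{P3.1}, through Theorem \ref{T2.4}, which guarantees that attached primes of local cohomology behave well under completion and localization; this is what permits passing to $\widehat R$ (an image of a Gorenstein ring) and running local duality there. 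Your outline contains neither the induction on dimension, nor the localization at attached primes, nor any substitute for Proposition \ref{P3.1}, so as written it describes the difficulty rather than resolving it.
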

As a direct application of Theorem \ref{T1.1} we obtain that the absolute integral closure $R^+$ is a big Cohen-Macaulay algebra (cf. Corollary \ref{C3.3}). It worth be noted that every excellent local ring is an image
of a Cohen-Macaulay excellent local ring by T. Kawasaki (cf. \cite[Corollary 1.2]{K01}). Therefore, our results also extend the original result of Hochster and Huneke with a simpler proof. The main results will be proven in the last section. In the next section we recall the theory of attached primes of Artinian (local cohomology) modules.

\section{Preliminaries}

Throughout this section $(R, \frak m)$ be a commutative Noetherian local ring. We recall the main result of \cite{NQ14} which is an illustration for our belief (mentioned in the introduction).\\

I.G. Macdonald, in \cite{Mac73}, introduced the theory of secondary representation for Artinian modules, which is in some sense dual to the theory of primary decomposition for Noetherian modules. Let $A\neq 0$ be an Artinian $R$-module. We say that $A$ is {\it secondary} if the multiplication by $x$ on $A$ is surjective or nilpotent for every $x\in R$. In this case, the set $\frak p := \sqrt{(\mathrm{Ann}_RA)}$ is a prime ideal of $R$ and we say that $A$ is {\it $\frak p$-secondary}. Note that every Artinian $R$-module $A$ has a minimal secondary representation $A=A_1+\ldots+A_n,$ where $A_i$ is $\frak p_i$-secondary, each $A_i$ is not redundant and $\frak p_i\neq \frak p_j$ for all $i\neq j.$ The set $\{\frak p_1,\ldots,\frak p_n\}$ is independent of the choice of the minimal secondary representation of $A$. This set is called the set of {\it attached primes} of $A$ and denoted by $\mathrm{Att}_RA$. Notice that if $R$ is complete we have the Matlis dual $D(A)$ of $A$ is Noetherian and $\mathrm{Att}_RA = \mathrm{Ass}_RD(A)$.

For each ideal $I$ of $R$, we denote by $Var(I)$  the set of all prime ideals of $R$ containing $I$. The following is easy to understand from the theory of associated primes.

\begin{lemma}[\cite{Mac73}] \label{L2.1} Let $A$ be an Artinian $R$-module. The following statements are true.
\begin{enumerate}[{(i)}]\rm
\item {\it $A\neq 0$ if and only if $\mathrm{Att}_RA\neq \emptyset.$}
\item {\it $A \neq 0$ has finite length if and only if $\mathrm{Att}_RA\neq \{\frak m\}.$}
\item {\it $\min \mathrm{Att}_RA=\min Var(\mathrm{Ann}_RA).$ In particular,  $$\dim (R/\mathrm{Ann}_RA)=\max \{\dim(R/\frak p) \mid \frak p\in \mathrm{Att}_RA\}.$$}
\item {\it If $0\rightarrow A'\rightarrow A\rightarrow A''\rightarrow 0$ is an exact sequence of Artinian $R$-modules then
$$\mathrm{Att}_RA''\subseteq \mathrm{Att}_RA\subseteq \mathrm{Att}_RA'\cup\mathrm{Att}_RA''.$$}
 \end{enumerate}
\end{lemma}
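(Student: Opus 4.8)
The plan is to fix a minimal secondary representation $A=A_1+\cdots+A_n$ with $A_i$ a $\fp_i$-secondary submodule, so that $\Att_R A=\{\fp_1,\dots,\fp_n\}$ — the existence and independence of this data being already recalled above — and to reduce everything to two elementary remarks. The first is that a nonzero quotient of a $\fp$-secondary module is again $\fp$-secondary, since an element acting nilpotently (resp. surjectively) on a module does so on every quotient. The second is that $\Ann_R(B_1+\cdots+B_m)=\bigcap_i\Ann_R B_i$, so $\sqrt{\Ann_R A}=\bigcap_{i=1}^{n}\fp_i$; more generally, if $N=\sum_l N_l$ with each $N_l$ either $\fq_l$-secondary or zero and $N$ is itself $\fp$-secondary, then $\fp=\sqrt{\Ann_R N}=\bigcap_{N_l\ne 0}\fq_l$ is a prime containing the product of those $\fq_l$ and hence equals one of them, so $\fp\in\{\fq_l\}$. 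From these I would first extract the description used throughout: $\Att_R A$ equals the set of primes $\fp$ for which $A$ has a nonzero $\fp$-secondary quotient. (``$\subseteq$'': $A/\sum_{j\ne i}A_j\cong A_i/(A_i\cap\sum_{j\ne i}A_j)$ is nonzero by irredundancy, hence $\fp_i$-secondary by the first remark. ``$\supseteq$'': apply the second remark to the images of the $A_i$ in the given quotient.)

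Items (i)--(iii) would then be direct. (i) holds since $A\ne 0$ has a representation with $n\ge 1$ while $\Att_R(0)=\emptyset$. For (iii), $\Var(\Ann_R A)=\Var(\bigcap_i\fp_i)=\bigcup_i\Var(\fp_i)$, whose minimal members are exactly those of $\{\fp_1,\dots,\fp_n\}=\Att_R A$, and the dimension formula follows because both $\sup\{\dim R/\fq:\fq\in\Var(\Ann_R A)\}$ and $\max\{\dim R/\fp:\fp\in\Att_R A\}$ are attained along minimal primes. For (ii) (read with ``$=$'' for ``$\ne$''): if $\Att_R A=\{\fm\}$ then $A$ is $\fm$-secondary, so $\fm^tA=0$ for some $t$, and the filtration $A\supseteq\fm A\supseteq\cdots\supseteq\fm^tA=0$ has subquotients that are Artinian $R/\fm$-vector spaces, hence of finite length, so $A$ has finite length; conversely, finite length forces $\fm^tA=0$, whence $\fm\subseteq\sqrt{\Ann_R A}=\bigcap_i\fp_i$, so every $\fp_i=\fm$ and, by distinctness, $n=1$.

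For (iv) I would write $\pi\colon A\to A''$ and note $A''=\sum_i\pi(A_i)$ with each $\pi(A_i)$ being $\fp_i$-secondary or zero; collecting equal radicals and dropping redundant summands gives a minimal secondary representation of $A''$ with radicals in $\{\fp_1,\dots,\fp_n\}$, so $\Att_R A''\subseteq\Att_R A$. For $\Att_R A\subseteq\Att_R A'\cup\Att_R A''$, fix $i$, put $B_i=\sum_{j\ne i}A_j$ and $Q=A/B_i$ (which is $\fp_i$-secondary), with quotient map $q\colon A\to Q$, and look at $q(A')=(A'+B_i)/B_i\subseteq Q$: if $q(A')=Q$ then $A=A'+B_i$, so $Q\cong A'/(A'\cap B_i)$ is a nonzero $\fp_i$-secondary quotient of $A'$ and $\fp_i\in\Att_R A'$; if $q(A')\subsetneq Q$ then $Q/q(A')$ is a nonzero quotient of the $\fp_i$-secondary module $Q$, hence $\fp_i$-secondary, and $Q/q(A')=A/(A'+B_i)$ is a quotient of $A''$, so $\fp_i\in\Att_R A''$.

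Given the representation theory that is already in place, the argument is essentially bookkeeping, and the one point I expect to need care is this reverse inclusion in (iv). The temptation is to restrict the secondary representation of $Q$ to the submodule $q(A')$, but this fails: a submodule of a $\fp$-secondary module need not be secondary, and may even lose $\fp$ from its attached primes (for instance the socle of the injective hull $E_R(R/\fm)$ over a one-dimensional domain $R$ is $\fm$-secondary yet lies inside the $(0)$-secondary module $E_R(R/\fm)$). Working entirely with quotients of $Q$, and isolating the case $q(A')=Q$ to recover the contribution of $A'$, is what circumvents this. (Were the existence of minimal secondary representations not already granted, one would first show that every nonzero Artinian module is a finite sum of sum-irreducible submodules — a minimal-counterexample argument using the descending chain condition — and that a sum-irreducible Artinian module is secondary, since for $x\in R$ the chain $xA\supseteq x^2A\supseteq\cdots$ stabilizes at some $x^tA$, whence $A=x^tA+(0:_A x^t)$ and sum-irreducibility forces one summand to equal $A$.)
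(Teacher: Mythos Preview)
Your argument is correct, but there is nothing to compare it against: the paper does not prove Lemma~2.1 at all. It is stated with a citation to Macdonald~\cite{Mac73} and prefaced by the remark that it ``is easy to understand from the theory of associated primes''. What you have written is a clean, self-contained verification of these standard facts, organized around the characterization of $\Att_R A$ as the set of primes arising as radicals of secondary quotients. The handling of the reverse inclusion in~(iv) via the dichotomy on $q(A')$ is the right maneuver, and your warning about submodules of secondary modules is well placed. You also correctly flag the typo in~(ii): the statement should read $\Att_R A=\{\fm\}$, not $\Att_R A\neq\{\fm\}$.
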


Let $\widehat{R}$ be the $\frak m$-adic complete of $R$. Note that every Artinan $R$-module $A$ has a natural structure as an $\widehat{R}$-module and with this structure, each subset of $A$ is an $R$-submodule if and only if it is an $\widehat{R}$-submodule. Therefore $A$ is an Artinian $\widehat{R}$-module. So, the set of attached primes $\mathrm{Att}_{\widehat{R}}A$ of $A$ over $\widehat{R}$ is well defined.

\begin{lemma}\label{L2.2} (\cite[8.2.4, 8.2.5]{BS98}).  $\mathrm{Att}_RA=\big\{P\cap R \mid P\in\mathrm{Att}_{\widehat{R}}A\big\}.$
\end{lemma}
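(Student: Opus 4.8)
The plan is to argue directly with Macdonald's secondary representations, using the fact recalled just before the statement that the $R$-submodules of $A$ coincide with its $\widehat{R}$-submodules, and that $A$ is Artinian (hence has secondary representations) over both $R$ and $\widehat{R}$. The engine of the whole proof is the following observation: \emph{if $C$ is a nonzero $P$-secondary $\widehat{R}$-module, then $C$ is $(P\cap R)$-secondary as an $R$-module}. Indeed, for every $r\in R\subseteq\widehat{R}$ the homothety $r\cdot\colon C\to C$ is surjective or nilpotent because $C$ is secondary over $\widehat{R}$, so $C$ is secondary over $R$; and since $\mathrm{Ann}_R C=\mathrm{Ann}_{\widehat{R}}C\cap R$ one checks that $\sqrt{\mathrm{Ann}_R C}=\sqrt{\mathrm{Ann}_{\widehat{R}}C}\cap R=P\cap R$, which is prime, being the contraction of a prime. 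I will also use the standard reformulation, immediate from Lemma \ref{L2.1}(iv), that $\frak{p}\in\mathrm{Att}_RA$ if and only if $A$ admits a nonzero $\frak{p}$-secondary quotient $A/B$.

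For the inclusion $\mathrm{Att}_RA\subseteq\{P\cap R\mid P\in\mathrm{Att}_{\widehat{R}}A\}$ I would fix a minimal secondary representation $A=A_1+\cdots+A_n$ over $\widehat{R}$, with $A_i$ being $P_i$-secondary and $\{P_1,\ldots,P_n\}=\mathrm{Att}_{\widehat{R}}A$. Each $A_i$ is an $R$-submodule, and by the observation it is $(P_i\cap R)$-secondary over $R$; hence $A=A_1+\cdots+A_n$ is a (not necessarily minimal) secondary representation of $A$ over $R$ whose primes are the $P_i\cap R$. Since the attached primes are exactly the primes of a minimal representation, and such a representation is obtained from any representation by grouping equal primes and deleting redundant terms, they form a subset of the primes occurring in \emph{any} representation; thus $\mathrm{Att}_RA\subseteq\{P_i\cap R\}\subseteq\{P\cap R\mid P\in\mathrm{Att}_{\widehat{R}}A\}$.

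For the reverse inclusion I would take $P\in\mathrm{Att}_{\widehat{R}}A$, so $P=P_i$ for some $i$ in the representation above, and form the quotient $A/\big(\sum_{j\neq i}A_j\big)$. This is a quotient of $A_i$, nonzero by irredundancy of the minimal representation, hence $P$-secondary over $\widehat{R}$, and therefore $(P\cap R)$-secondary over $R$ by the observation. Being a nonzero $(P\cap R)$-secondary quotient of $A$, it forces $P\cap R\in\mathrm{Att}_RA$ by the reformulation above. Combining the two inclusions gives the asserted equality.

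The step I expect to require the most care is the first inclusion, because a minimal secondary representation over $\widehat{R}$ need not restrict to a minimal one over $R$: distinct $P_i$ can contract to the same prime of $R$ and some summands may become redundant. What rescues the argument is precisely that one only needs the attached primes to lie \emph{among} the primes of an arbitrary representation, together with the radical-annihilator computation in the observation, so no control over minimality after restriction is needed. A slightly more hands-on alternative for this inclusion starts from a $\frak{p}$-secondary $R$-quotient $A/B$, decomposes it over $\widehat{R}$ with attached primes $Q_1,\ldots,Q_m\in\mathrm{Att}_{\widehat{R}}A$, and uses $\frak{p}=\sqrt{\mathrm{Ann}_R(A/B)}=\bigcap_k(Q_k\cap R)$ to conclude, by primality of $\frak{p}$, that $\frak{p}=Q_k\cap R$ for some $k$.
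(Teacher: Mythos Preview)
Your argument is correct. The paper does not prove Lemma~\ref{L2.2} at all; it merely cites \cite[8.2.4, 8.2.5]{BS98}, where the result appears as an exercise, and your proof via secondary representations is precisely the standard argument one would give for that exercise, so the approaches coincide.
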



Let $M$ be a finitely generated $R$-module. It is well known that the local cohomology module $H^i_{\frak m}(M)$ is Artinian for all $i\geq 0$ (cf. \cite[Theorem 7.1.3]{BS98}). Suppose that $R$ is an image of a Gorenstein local ring. R.Y. Sharp, in \cite{Sh75}, used the local duality theorem to prove the following relation
$$ \mathrm{Att}_{R_{\frak p}}\big(H^{i-\dim (R/\frak p)}_{\frak p R_{\frak p}}(M_{\frak p})\big)=\big\{\frak q R_{\frak p}\mid \frak q\in\mathrm{Att}_R(H^i_{\frak m}(M)), \frak q \subseteq \frak p\big\}$$
for all $\frak p \in \mathrm{Supp}(M)$ and all $i \ge 0$. Based on the study of splitting of local cohomology (cf. \cite{CQ11}, \cite{CQ13}), L.T. Nhan and the author showed that the above relation holds true on the category of finitely generated $R$-modules if and only if $R$ is an image of a Cohen-Macaulay local ring (cf. \cite{NQ14}). It worth be noted that $R$ is an image of a Cohen-Macaulay local ring if and only if $R$ is universally catenary and all its formal fibers are Cohen-Macaulay by Kawasaki (cf. \cite[Corollary 1.2]{K01}). More precisely, we proved the following.

\begin{theorem} \label{T2.4} The following statements are equivalent:
\begin{enumerate}[{(i)}]\rm
\item {\it $R$ is an image of a Cohen-Macaulay local ring;}
\item {\it $ \mathrm{Att}_{R_{\frak p}}\big(H^{i-\dim (R/\frak p)}_{\frak p R_{\frak p}}(M_{\frak p})\big)=\big\{\frak q R_{\frak p}\mid \frak q\in\mathrm{Att}_R(H^i_{\frak m}(M)), \frak q \subseteq \frak p\big\}$
 for every finitely generated $R$-module $M$, integer $i\ge 0$ and prime ideal $\frak p$ of $R$;}
\item {\it $\displaystyle\mathrm{Att}_{\widehat{R}}(H^i_{\frak m}(M))=\bigcup_{\frak p\in\mathrm{Att}_R(H^i_{\frak m}(M))}\mathrm{Ass}_{\widehat{R}}(\widehat{R}/\frak p\widehat{R})$ for every finitely generated $R$-module $M$ and  integer $i\geq 0$.}
  \end{enumerate}
\end{theorem}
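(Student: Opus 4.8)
The plan is to invoke Kawasaki's theorem \cite[Corollary 1.2]{K01} to replace condition (i) by ``$R$ is universally catenary and all its formal fibres are Cohen--Macaulay'', and then to prove the equivalences (i)$\Leftrightarrow$(ii) and (i)$\Leftrightarrow$(iii) separately. The two forward implications (i)$\Rightarrow$(ii) and (i)$\Rightarrow$(iii) extend R.Y.\ Sharp's theorem \cite{Sh75} --- which settled the image‑of‑Gorenstein case by two applications of local duality --- while the two converses (ii)$\Rightarrow$(i) and (iii)$\Rightarrow$(i) are the genuinely new point.

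\textbf{Forward directions.} Here one imitates Sharp's local‑duality argument, but over the completion. Write $\widehat R=T/J$ with $T$ a complete regular local ring of dimension $n$ (Cohen's theorem); local duality over $T$ and over its localizations is then at hand. For a finitely generated $R$‑module $M$ one gets $D\bigl(H^{i}_{\frak m}(M)\bigr)\cong\Ext^{\,n-i}_{T}(\widehat M,T)$, a finitely generated $\widehat R$‑module, so that $\Att_{\widehat R}\bigl(H^{i}_{\frak m}(M)\bigr)=\Ass_{\widehat R}\bigl(\Ext^{\,n-i}_{T}(\widehat M,T)\bigr)$; localizing this $\Ext$‑module at a prime $P$ of $\widehat R$ and applying local duality again over $T_{P}$ identifies it, up to the shift $\dim T_{P}=n-\dim\widehat R/P$, with a Matlis dual of $H^{\,i-\dim\widehat R/P}_{P\widehat R_{P}}(\widehat M_{P})$. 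This yields, over $\widehat R$, the exact analogues of (ii) and (iii). One then descends to $R$: the passage $\Att_{R}(-)=\{P\cap R:P\in\Att_{\widehat R}(-)\}$ is Lemma \ref{L2.2}, and the Cohen--Macaulayness of the formal fibres together with universal catenarity provides the dimension identities (notably $\dim\widehat R/P=\dim R/(P\cap R)$ for the relevant $P$, with the analogous statements after localizing, so that $\widehat{R_{\frak p}}$, $\widehat R_{P}$ and $R_{\frak p}$ are comparably sized) that turn the $\widehat R$‑level formulas into (ii) and (iii). This descent is the heaviest, but routine, part, and it is where Kawasaki's characterization is used essentially.

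\textbf{Converses.} Assuming (ii) (resp.\ (iii)) one must, by Kawasaki's reformulation, deduce that $R$ is universally catenary with Cohen--Macaulay formal fibres, and the method is to feed well‑chosen modules into the formula. The first test is $M=R/\frak p$ with $i=\dim R/\frak p$: then $H^{i}_{\frak m}(R/\frak p)$ is the top local cohomology, so $\Att_{R}\bigl(H^{i}_{\frak m}(R/\frak p)\bigr)=\{\frak p\}$ while $\Att_{\widehat R}\bigl(H^{i}_{\frak m}(R/\frak p)\bigr)=\{Q\supseteq\frak p\widehat R:\dim\widehat R/Q=\dim R/\frak p\}$, and (iii) then forces $\Ass_{\widehat R}(\widehat R/\frak p\widehat R)=\{Q\supseteq\frak p\widehat R:\dim\widehat R/Q=\dim R/\frak p\}$; that is, $\widehat R/\frak p\widehat R$ is equidimensional and unmixed for every prime $\frak p$, which by Ratliff's theorem already yields universal catenarity and the absence of embedded primes in the formal fibres. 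To upgrade ``no embedded primes'' to full Cohen--Macaulayness of the fibres one refines this with further test modules and cohomological degrees, reading off from the formula the vanishing of the relevant local cohomology of the fibres; for (ii) the reasoning is parallel --- it is vacuous at $\frak p=\frak m$, but the instances at nested primes $\frak q\subseteq\frak p$ carry the same fibre information --- and the whole circle can equally be organized around the splitting‑of‑local‑cohomology results of \cite{CQ11}, \cite{CQ13}, of which (ii)--(iii) are an avatar.

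\textbf{Main obstacle.} The delicate, entirely new ingredient is the converse direction: one has to reconstruct the global structural properties ``universally catenary'' and ``Cohen--Macaulay formal fibres'' purely from the behaviour of attached primes of local cohomology modules, and the crux is to supply enough test data $(M,i)$ so that the attached‑primes formula pins down the formal fibres completely rather than only up to embedded components. On the forward side the only real work is the bookkeeping with the formal‑fibre hypothesis in the descent from $\widehat R$ to $R$.
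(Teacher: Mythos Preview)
The paper does not prove Theorem~\ref{T2.4}; it is quoted from \cite{NQ14} as a known result and used as a tool (specifically, part (iii) is invoked in the proof of Proposition~\ref{P3.1}). So there is no proof in the present paper to compare your proposal against. The only hint the paper gives about the actual argument is the sentence preceding the statement: the proof in \cite{NQ14} is ``based on the study of splitting of local cohomology'' from \cite{CQ11}, \cite{CQ13}. You do mention this at the very end of your sketch as an alternative organization, but in \cite{NQ14} it is apparently the main engine, not an afterthought.

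As to the content of your outline itself: the forward direction via duality over the completion and descent along $R\to\widehat R$ is a natural plan, and your use of Kawasaki's characterization is appropriate. However, the descent step you call ``heaviest, but routine'' is exactly where the difficulty concentrates, and your sketch does not make clear how Cohen--Macaulay formal fibres alone give you the full statement (iii) rather than only one containment; in particular, showing that \emph{every} $Q\in\Ass_{\widehat R}(\widehat R/\frak p\widehat R)$ lies in $\Att_{\widehat R}(H^i_{\frak m}(M))$ whenever $\frak p\in\Att_R(H^i_{\frak m}(M))$ needs more than dimension bookkeeping. For the converse, your test with $M=R/\frak p$ and top cohomology correctly recovers unmixedness and universal catenarity via Ratliff, but the passage from ``no embedded primes in the formal fibres'' to ``formal fibres are Cohen--Macaulay'' is only gestured at (``further test modules and cohomological degrees''); this is precisely the step where the splitting results of \cite{CQ11}, \cite{CQ13} are presumably doing real work in \cite{NQ14}, and your sketch does not indicate which test data would suffice.
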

The above Theorem says that the attached primes of local cohomology modules have good behaviors with completion and localization when $R$ is an image of a Cohen-Macaulay local ring. This will be very useful in the next section.

\section{Proof the main result}
Throughout this section, let $(R, \frak m, k)$ be a commutative Noetherian local ring that is an image of a Cohen-Macaulay local ring. The following plays the key role in our proof of the main result.

\begin{proposition}\label{P3.1} Let $M$ and $N$ be finitely generated $R$-modules and $\varphi: M \to N$ a homomorphism. For each $i \ge 0$, $\varphi$ induces the homomorphism $\varphi^i : H^i_{\frak m}(M) \to H^i_{\frak m}(N)$. Suppose for all $\frak p \in \mathrm{Att}_R(H^i_{\frak m}(M))$ and $\frak p \neq \frak m$, the map $\varphi_{\frak p}: M_{\frak p} \to N_{\frak p}$ induces the zero map
$$\varphi^{i -t_{\frak p}}_{\frak p}: H^{i-t_{\frak p}}_{\frak p R_{\frak p}}(M_{\frak p}) \to H^{i-t_{\frak p}}_{\frak p R_{\frak p}}(N_{\frak p}),$$
 where $t_{\frak p} =\dim R/\frak p$. Then $\mathrm{Im}(\varphi^i)$ has finite length.
\end{proposition}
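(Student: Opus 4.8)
The goal is to show $\mathrm{Im}(\varphi^i)$ has finite length, so by Lemma \ref{L2.1}(ii) it suffices to show $\mathrm{Att}_R(\mathrm{Im}(\varphi^i)) \subseteq \{\frak m\}$. First I would note $\mathrm{Im}(\varphi^i)$ is Artinian (quotient of the Artinian module $H^i_{\frak m}(M)$), so it's a module over $\widehat{R}$, and by Lemma \ref{L2.2} it's enough to control $\mathrm{Att}_{\widehat{R}}(\mathrm{Im}(\varphi^i))$ — show every $P \in \mathrm{Att}_{\widehat{R}}(\mathrm{Im}(\varphi^i))$ equals $\widehat{\frak m}$. Since $\mathrm{Im}(\varphi^i)$ is a quotient of $H^i_{\frak m}(M)$, Lemma \ref{L2.1}(iv) gives $\mathrm{Att}_{\widehat{R}}(\mathrm{Im}(\varphi^i)) \subseteq \mathrm{Att}_{\widehat{R}}(H^i_{\frak m}(M))$. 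Now invoke Theorem \ref{T2.4}(iii): $\mathrm{Att}_{\widehat{R}}(H^i_{\frak m}(M)) = \bigcup_{\frak p \in \mathrm{Att}_R(H^i_{\frak m}(M))} \mathrm{Ass}_{\widehat{R}}(\widehat{R}/\frak p\widehat{R})$. So any such $P$ lies over some $\frak p \in \mathrm{Att}_R(H^i_{\frak m}(M))$ with $P \in \mathrm{Ass}_{\widehat{R}}(\widehat{R}/\frak p\widehat{R})$; in particular $P \cap R = \frak p$ (an associated prime of $\widehat{R}/\frak p\widehat{R}$ contracts to $\frak p$ since $\widehat R$ is faithfully flat).

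The heart of the argument is to rule out $\frak p \neq \frak m$. Suppose $\frak p \in \mathrm{Att}_R(H^i_{\frak m}(M))$ with $\frak p \neq \frak m$, and $P \in \mathrm{Ass}_{\widehat{R}}(\widehat{R}/\frak p\widehat{R})$; I want to show $P \notin \mathrm{Att}_{\widehat{R}}(\mathrm{Im}(\varphi^i))$. The strategy is to localize. Consider the induced map $\varphi^i_{\frak p}$ at the level of the localization $R_{\frak p}$. The hypothesis tells us $\varphi^{i-t_{\frak p}}_{\frak p}: H^{i-t_{\frak p}}_{\frak p R_{\frak p}}(M_{\frak p}) \to H^{i-t_{\frak p}}_{\frak p R_{\frak p}}(N_{\frak p})$ is the zero map, where $t_{\frak p} = \dim R/\frak p$. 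I would then apply Theorem \ref{T2.4}(ii) to both $M$ and $N$, and functoriality of the shifted-localization identification of attached primes, to transfer this vanishing to information about $\mathrm{Att}_{R_{\frak p}}((\mathrm{Im}\,\varphi^i)_{\frak p})$. Concretely: $(\mathrm{Im}\,\varphi^i)_{\frak p} = \mathrm{Im}(\varphi^i_{\frak p})$ since localization is exact. The subtle point is relating $\mathrm{Im}(\varphi^i_{\frak p})$ — the localized image of the map on $H^i_{\frak m}$ — to $\mathrm{Im}(\varphi^{i-t_{\frak p}}_{\frak p})$ on $H^{i-t_{\frak p}}_{\frak p R_{\frak p}}$. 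Theorem \ref{T2.4}(ii) says $\mathrm{Att}_{R_{\frak p}}(H^{i-t_{\frak p}}_{\frak p R_{\frak p}}(M_{\frak p})) = \{\frak q R_{\frak p} : \frak q \in \mathrm{Att}_R(H^i_{\frak m}(M)),\ \frak q \subseteq \frak p\}$, and likewise for $N$. Using Lemma \ref{L2.1}(iv) on the image, together with the hypothesis $\varphi^{i-t_{\frak p}}_{\frak p} = 0$, which forces $\mathrm{Att}_{R_{\frak p}}(\mathrm{Im}\,\varphi^{i-t_{\frak p}}_{\frak p}) = \emptyset$, i.e. $\mathrm{Im}\,\varphi^{i-t_{\frak p}}_{\frak p} = 0$ — so I must argue that the vanishing of the map on the lower local cohomology over $R_{\frak p}$ forces $\frak p R_{\frak p} \notin \mathrm{Att}_{R_{\frak p}}(\mathrm{Im}\,\varphi^i_{\frak p})$, equivalently (since $\frak p R_{\frak p}$ is the maximal ideal of $R_{\frak p}$ and it is the unique element of $\mathrm{Att}_{R_{\frak p}}$ corresponding to $\frak p$ itself via the correspondence) that $\frak p \notin \mathrm{Att}_R(\mathrm{Im}\,\varphi^i)$ after all, contradicting our choice.

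Let me reorganize this more carefully as the main line. Set $A = \mathrm{Im}(\varphi^i) \subseteq H^i_{\frak m}(N)$, an Artinian $R$-module; I claim $\mathrm{Att}_R(A) \subseteq \{\frak m\}$. Take $\frak p \in \mathrm{Att}_R(A)$ with $\frak p \neq \frak m$. By Lemma \ref{L2.1}(iv) applied to $0 \to A \to H^i_{\frak m}(N)$, we get $\frak p \in \mathrm{Att}_R(H^i_{\frak m}(N))$; and applying it to $H^i_{\frak m}(M) \twoheadrightarrow A$, $\frak p \in \mathrm{Att}_R(H^i_{\frak m}(M))$. Now $t_{\frak p} = \dim R/\frak p > 0$. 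By the minimality characterization in Lemma \ref{L2.1}(iii) or directly, $\frak p$ is then an element of $\mathrm{Att}_R(H^i_{\frak m}(M))$ contained in $\frak p$, so $\frak p R_{\frak p} \in \mathrm{Att}_{R_{\frak p}}(H^{i-t_{\frak p}}_{\frak p R_{\frak p}}(M_{\frak p}))$ by Theorem \ref{T2.4}(ii), and similarly $\frak p R_{\frak p} \in \mathrm{Att}_{R_{\frak p}}(H^{i-t_{\frak p}}_{\frak p R_{\frak p}}(N_{\frak p}))$. But I also need $\frak p R_{\frak p} \in \mathrm{Att}_{R_{\frak p}}(\mathrm{Im}\,\varphi^i_{\frak p})$, which I'd get by applying Theorem \ref{T2.4}(ii) to the module-theoretic image — here I'd want the correspondence to be compatible with the map $\varphi$, which should follow because the whole construction in \cite{NQ14} is functorial (or, failing a black-box appeal, because $\mathrm{Att}_{R_{\frak p}}(A_{\frak p}) \supseteq \{\frak q R_{\frak p}: \frak q \in \mathrm{Att}_R A,\ \frak q \subseteq \frak p\}$ can be extracted from the proof of Theorem \ref{T2.4}). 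Then $\mathrm{Im}\,\varphi^i_{\frak p} = A_{\frak p}$ has a nonzero attached prime over $R_{\frak p}$, hence is nonzero, hence (tracking the grading) $\mathrm{Im}\,\varphi^{i-t_{\frak p}}_{\frak p} \neq 0$ — contradicting the hypothesis. Therefore no such $\frak p$ exists, $\mathrm{Att}_R(A) \subseteq \{\frak m\}$, and $A$ has finite length by Lemma \ref{L2.1}(ii).

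\medskip

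\emph{Main obstacle.} The genuinely delicate step is making precise the compatibility between the map $\varphi^i$ on $H^i_{\frak m}$ and the maps $\varphi^{i-t_{\frak p}}_{\frak p}$ on the shifted localized cohomology — i.e.\ upgrading the \emph{statement} of Theorem \ref{T2.4}(ii) about attached primes of a single module to a \emph{functorial} statement that detects whether an induced map is zero. One clean way around this: rather than chasing the image of $\varphi^i$ directly, apply Theorem \ref{T2.4}(ii) to the kernel and cokernel of $\varphi^i_{\frak p}$ viewed via exact sequences, and use Lemma \ref{L2.1}(iv) to sandwich $\mathrm{Att}_{R_{\frak p}}$ of the image. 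I expect the cleanest route is: (a) show $A_{\frak p}=0$ $\iff$ $\frak p R_{\frak p}\notin\mathrm{Att}_{R_{\frak p}}A_{\frak p}$ for the relevant $\frak p$ using that $\frak p R_{\frak p}$ would be the maximal ideal there and $A_{\frak p}$ is supported only there; (b) identify $A_{\frak p}$ with a subquotient built from $H^{i-t_{\frak p}}_{\frak p R_{\frak p}}(M_{\frak p})$ and $H^{i-t_{\frak p}}_{\frak p R_{\frak p}}(N_{\frak p})$ through the correspondence, where the hypothesis kills the connecting map. Getting (b) exactly right — the passage from $H^i_{\frak m}$ over $R$ to $H^{i-t_{\frak p}}_{\frak p R_{\frak p}}$ over $R_{\frak p}$ at the level of maps, not just modules — is where the real work lies, and it is precisely what the machinery of \cite{NQ14} (splitting of local cohomology) is designed to supply.
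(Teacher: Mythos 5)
You have the right skeleton --- reduce to showing $\mathrm{Att}_R(\mathrm{Im}\,\varphi^i)\subseteq\{\frak m\}$, use Lemma \ref{L2.1}(iv) to place a putative non-maximal attached prime $\frak p$ of the image into $\mathrm{Att}_R(H^i_{\frak m}(M))$, and then try to contradict the hypothesis at $\frak p$ --- and this matches the paper's opening moves. But the step you yourself flag as the ``main obstacle'' is a genuine gap, and the workarounds you sketch do not close it. First, the naive localization you lean on is vacuous: $H^i_{\frak m}(N)$ and hence $A=\mathrm{Im}(\varphi^i)$ are $\frak m$-power torsion, so $A_{\frak p}=0$ for every $\frak p\neq\frak m$; thus ``$(\mathrm{Im}\,\varphi^i)_{\frak p}=\mathrm{Im}(\varphi^i_{\frak p})$ by exactness of localization'' says $0=0$, and your criterion (a) is meaningless as stated. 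The correspondence in Theorem \ref{T2.4}(ii) is emphatically not localization of the module $H^i_{\frak m}(M)$; it relates attached primes of $H^i_{\frak m}(M)$ to those of the \emph{different} module $H^{i-t_{\frak p}}_{\frak pR_{\frak p}}(M_{\frak p})$. Second, Theorem \ref{T2.4} is stated only for modules of the form $H^i_{\frak m}(M)$ with $M$ finitely generated; the image $\mathcal I=\mathrm{Im}(\varphi^i)$ is not of this form, so you cannot apply it to $\mathcal I$, and sandwiching with Lemma \ref{L2.1}(iv) only yields containments of attached-prime sets --- it cannot produce the implication ``$\frak p\in\mathrm{Att}_R(\mathcal I)$ forces $\varphi^{i-t_{\frak p}}_{\frak p}\neq 0$,'' which is exactly what is needed.

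The paper supplies the missing functoriality by dualizing rather than by quoting Theorem \ref{T2.4}(ii). It passes to $\widehat R$, writes $\widehat R$ as an image of a Gorenstein local ring $S$ of dimension $n$, and applies Matlis duality: $\varphi^i$ becomes a map of \emph{finitely generated} $S$-modules $\mathrm{Ext}^{n-i}_S(\widehat N,S)\to\mathrm{Ext}^{n-i}_S(\widehat M,S)$ with image $D(\mathcal I)$, and the attached prime $P\in\mathrm{Att}_{\widehat R}\mathcal I$ becomes an associated prime of $D(\mathcal I)$, which \emph{is} detected by honest localization: $D(\mathcal I)_P\neq 0$, so the localized dual map is non-zero. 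Local duality over $S_{P'}$ then converts this into non-vanishing of $\widehat\varphi^{\,i-t_{\frak p}}_P$ on $H^{i-t_{\frak p}}_{P\widehat R_P}(\widehat M_P)$ --- here Theorem \ref{T2.4}(iii) is used precisely to guarantee $P\in\mathrm{Ass}_{\widehat R}(\widehat R/\frak p\widehat R)$, hence $\dim\widehat R/P=\dim R/\frak p=t_{\frak p}$, so the cohomological degrees line up --- and finally flat base change along $R_{\frak p}\to\widehat R_P$ identifies $\widehat\varphi^{\,i-t_{\frak p}}_P$ with $\varphi^{i-t_{\frak p}}_{\frak p}\otimes_{R_{\frak p}}\widehat R_P$, contradicting the hypothesis. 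This duality-plus-base-change argument is the actual content of the proof, and it is the piece your proposal defers to ``the machinery of \cite{NQ14}'' without carrying it out.
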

\begin{proof}
Suppose $\mathrm{Im}(\varphi^i)$ has not finite length. By Lemma \ref{L2.1} there exists $\frak m \neq \frak p \in \mathrm{Att}_R(\mathrm{Im}(\varphi^i))$. So $\frak p \in \mathrm{Att}_R (H^i_{\frak m}(M))$ by Lemma \ref{L2.1} (iv). Consider $\mathrm{Im}(\varphi^i)$ as an Artinian $\widehat{R}$-module. By Lemma \ref{L2.2}, there exists $P \in \mathrm{Att}_{\widehat{R}}(\mathrm{Im}(\varphi^i))$ such that $P \cap R = \frak p$. Hence we have $P \in \mathrm{Att}_{\widehat{R}}(H^i_{\frak m}(M))$ by Lemma \ref{L2.1} (iv) again. Since $R$ is an image of a Cohen-Macaulay local ring, Theorem \ref{T2.4} (iii) implies that $P \in \mathrm{Ass}_{\widehat{R}} (\widehat{R}/\frak p \widehat{R})$. Therefore $\dim \widehat{R}/P = \dim R/\frak p$ by \cite[Theorem 2.1.15]{BH98}. We have $\widehat{R}$ is complete, so it is an image of a Gorenstein local ring $S$ (of dimension $n$). By local duality we have
$$D(\mathrm{Ext}^{n-i}_S(\widehat{M}, S)) \cong H^i_{\widehat{\frak m}} (\widehat{M}) \quad (\cong H^i_{\frak m}(M) \otimes_R \widehat{R} \cong H^i_{\frak m}(M)) ,$$
where $D = \mathrm{Hom}_{\widehat{R}} (-, E_{\widehat{R}}(k))$ is the Matlis duality functor (cf. \cite[Theorem 11.2.6]{BS98}). Since $\widehat{R}$ is complete we have $\mathrm{Ext}^{n-i}_S(\widehat{M}, S) \cong D(H^i_{\frak m}(M))$.\\
We write the map $\varphi^i : H^i_{\frak m}(M) \to H^i_{\frak m}(N)$ as the composition of two maps
$$H^i_{\frak m}(M) \to \mathcal{I} = \mathrm{Im}(\varphi^i) \to H^i_{\frak m}(N),$$
where the first of which is surjective and the second injective. Applying the Matlis duality functor $D$ we get the map $D(\varphi^i): \mathrm{Ext}^{n-i}_S(\widehat{N}, S) \to \mathrm{Ext}^{n-i}_S(\widehat{M}, S)$ is the composition of two maps
$$\mathrm{Ext}^{n-i}_S(\widehat{N}, S) \to D(\mathcal{I}) \to \mathrm{Ext}^{n-i}_S(\widehat{M}, S)$$
with the first of which is surjective and the second injective. We have $D(\mathcal{I})$ is a finitely generated $\widehat{R}$-module and $P \in
\mathrm{Ass}_{\widehat{R}}D(\mathcal{I}) \,\, (= \mathrm{Att}_{\widehat{R}}\mathcal{I})$. Let $P'$ be the pre-image of $P$ in $S$. Localization at $P'$ the above composition we get the composition
$$\mathrm{Ext}^{n-i}_{S_{P'}}(\widehat{N}_{P}, S_{P'}) \to (D(\mathcal{I}))_P \to \mathrm{Ext}^{n-i}_{S_{P'}}(\widehat{M}_P, S_{P'})$$
with the first of which is surjective and the second injective. Since $(D(\mathcal{I}))_P \neq 0$, we have the map
$$D(\varphi^i)_{P}: \mathrm{Ext}^{n-i}_{S_{P'}}(\widehat{N}_{P}, S_{P'}) \to \mathrm{Ext}^{n-i}_{S_{P'}}(\widehat{M}_P, S_{P'})$$
is a non-zero map. Notice that $\dim S_{P'} = n - t_{\frak p}$. Applying local duality (for $S_{P'}$) we have the map
$$\widehat{\varphi}^{i-t_{\frak p}}_P : H^{i-t_{\frak p}}_{P\widehat{R}_P}(\widehat{M}_P) \to H^{i-t_{\frak p}}_{P\widehat{R}_P}(\widehat{N}_P),$$
induced from the map $\widehat{\varphi}: \widehat{M} \to \widehat{N}$, is a non-zero map. Recalling our assumption that the map
$$\varphi^{i -t_{\frak p}}_{\frak p}: H^{i-t_{\frak p}}_{\frak p R_{\frak p}}(M_{\frak p}) \to H^{i-t_{\frak p}}_{\frak p R_{\frak p}}(N_{\frak p}),$$
induced from $\varphi : M \to N$, is zero.\\

On the other hand, the faithfully flat homomorphism of local rings $(R, \frak m) \to (\widehat{R}, \widehat{\frak m})$ induces the flat homomorphism $(R_{\frak p}, \frak p R_{\frak p}) \to (\widehat{R}_P, P\widehat{R}_P)$ by \cite[Theorem 7.1]{Mat86}. It is a local homomorphism so we have a faithfully flat homomorphism. It should be noted that $\sqrt{\frak p\widehat{R}_P} = P\widehat{R}_P$. Using the following commutative diagram of flat homomorphisms
\[
\xymatrix{
R \ar[d] \ar[r]      & R_{\frak p} \ar[d]\\
\widehat{R} \ar[r]   & \widehat{R}_P
          }
\]
one can check that
$$H^{i-t_{\frak p}}_{P\widehat{R}_P}(\widehat{M}_P) \cong H^{i-t_{\frak p}}_{\frak p R_{\frak p}}(M_{\frak p}) \otimes_{R_{\frak p}} \widehat{R}_P$$
and $\widehat{\varphi}^{i-t_{\frak p}}_P$ is just $\varphi^{i -t_{\frak p}}_{\frak p} \otimes_{R_{\frak p}} \widehat{R}_P$.
Therefore the maps $\varphi^{i -t_{\frak p}}_{\frak p}$ and $\widehat{\varphi}^{i-t_{\frak p}}_P$ are either zero or non-zero, simultaneously. This is a contradiction. The proof is complete.
\end{proof}
We are ready to prove the main result of this paper. In the rest of this section, $(R, \frak m)$ is a local domain of positive characteristic $p$ that is an image of a Cohen-Macaulay local ring. Let $I$ be an ideal of $R$ and $R'$ an $R$-algebra. Notice that the local cohomology, $H^i_I(-)$, can be computed via the \v{C}ech co-complex of the generators of $I$. The Frobenius ring homomorphism
$$f: R' {\longrightarrow} R'; {r \mapsto r^p }$$
induces a natural map $f_* : H^i_I(R') \to H^i_I(R')$ on all $i \ge 0$. It is called the (natural) action of Frobenius on $H^i_I(R')$.

\begin{proof}[Proof of Theorem \ref{T1.1}] We proceed by induction on $d = \dim R$. There is nothing to prove when $d=0$. Assume that $d>0$ and the theorem is proven for all smaller dimension. For each $i < d$ and $\frak p \in \mathrm{Att}_RH^i_{\frak m}(R)$, $\frak p \neq \frak m$, by the inductive hypothesis there is an $R'_{\frak p}$-subalgebra $\widetilde{R}^{i,\frak p}$ that is finite as $R_{\frak p}$-module such that the natural map
$$H^{i-t_{\frak p}}_{\frak pR_{\frak p}}(R'_{\frak p}) \to H^{i-t_{\frak p}}_{\frak pR_{\frak p}}(\widetilde{R}^{i,\frak p})$$
is the zero map, where $t_{\frak p} = \dim R/\frak p$. Let $\widetilde{R}^{i,\frak p} = R'_{\frak p}[z_1, ..., z_k]$, where $z_1, \ldots, z_k \in \overline{K}$ are integral over $R_{\frak p}$. Multiplying, if necessary, some suitable element of $R \setminus \frak p$, we can assume that each $z_j$ is integral over $R$. Set $R^{i, \frak p} = R'[z_1, \ldots, z_k]$. Clearly, $R^{i, \frak p}$ is an $R'$-subalgebra of $\overline{K}$ that is finite as $R$-module.\\
Since the sets $\{i \mid 0 \le i <d\}$ and $\mathrm{Att}_R(H^i_{\frak m}(R))$ are finite, the following is a finite extension of $R$
$$R^* = R[R^{i,\frak p} \mid i<d, \frak p \in \mathrm{Att}_R(H^i_{\frak m}(R)) \setminus \{\frak m\}].$$
  We have $R^*$ is an $R^{i, \frak p}$-subalgebra of $\overline{K}$ for all $i<d$ and all $\frak p \in \mathrm{Att}_R(H^i_{\frak m}(R)) \setminus \{\frak m\}$. The inclusions $R' \to R^{i,\frak p} \to R^*$ induce the natural maps
$$H^{i-t_{\frak p}}_{\frak pR_{\frak p}}(R'_{\frak p}) \to H^{i-t_{\frak p}}_{\frak pR_{\frak p}}(\widetilde{R}^{i,\frak p}) \to H^{i-t_{\frak p}}_{\frak pR_{\frak p}}(R^*_{\frak p}).$$
By the construction of $\widetilde{R}^{i,\frak p}$ we have the natural map
$$H^{i-t_{\frak p}}_{\frak pR_{\frak p}}(R'_{\frak p})  \to H^{i-t_{\frak p}}_{\frak pR_{\frak p}}(R^*_{\frak p})$$
is the zero map for all $i<d$ and all $\frak p \in \mathrm{Att}_RH^i_{\frak m}(R) \setminus \{\frak m\}$. By Proposition \ref{P3.1} we have the natural map
$$\varphi^i : H^i_{\frak m}(R') \to H^i_{\frak m}(R^*),$$
induced from the inclusion $\varphi : R' \to R^*$, has $\ell( \mathrm{Im}(\varphi^i))< \infty$ for all $i<d$.\\

Since the natural inclusion $\varphi : R' \to R^*$ is compatible with the Frobenius homomorphism on $R'$ and $R^*$, we have $\varphi^i$ is compatible with the Frobenius action $f_*$ on $H^i_{\frak m}(R')$ and $H^i_{\frak m}(R^*)$. Therefore $\mathrm{Im} \varphi^i$ is an $f_*$-stable $R$-submodule of $H^i_{\frak m}(R^*)$, i.e. $f_*(\alpha) \in \mathrm{Im} \varphi^i$ for every $\alpha \in \mathrm{Im} \varphi^i$. Since $\mathrm{Im} \varphi^i$ has finite length, every $\alpha \in \mathrm{Im} \varphi^i$ satisfies the "equation lemma" of Huneke-Lyubeznik (cf. \cite[Lemma 2.2]{HL07}). Hence each element of $\mathrm{Im} \varphi^i$ will map to the zero in local cohomology of a certainly finite extension of $R^*$. Since $\mathrm{Im} \varphi^i$ is a finitely generated $R$-module for all $i<d$, there is an $R^*$-subalgebra $R''$ of $\overline{K}$ that is finite as $R$-module such that the composition of the natural maps $H^i_{\frak m}(R') \to H^i_{\frak m}(R^*)  \to H^i_{\frak m}(R'')$ is zero for all $i<d$. The proof is complete.
\end{proof}

Similar to \cite[Corollary 2.3]{HL07} we have the following.
\begin{corollary}\label{C3.3}
 Let $(R, \frak m)$ be a commutative Noetherian local domain containing a field of positive characteristic $p$ and $R^+$ the absolute integral closure of $R$ in $\overline{K}$. Assume that $R$ is an image of a Cohen-Macaulay local ring. Then the following hold:
 \begin{enumerate}[{(i)}]\rm
\item {\it $H^i_{\frak m}(R^+) = 0$ for all $i < \dim R$}.
\item {\it Every system of parameters of $R$ is a regular sequence on $R^+$, i.e. $R^+$ is a big Cohen-Macaulay algebra.}
\end{enumerate}
\end{corollary}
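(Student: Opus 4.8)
The plan is to deduce both parts from Theorem~\ref{T1.1}, part~(i) being essentially formal. For part~(i): every element of $R^+$ is integral over $R$ and hence lies in a module-finite $R$-subalgebra of $\overline{K}$, so $R^+=\varinjlim_{R'}R'$ is the filtered colimit of all such $R'$. Since $H^i_{\frak m}(-)$ is computed by the \v{C}ech complex on a system of generators of $\frak m$, and tensor products and cohomology of a complex both commute with filtered colimits, $H^i_{\frak m}(R^+)=\varinjlim_{R'}H^i_{\frak m}(R')$. Fixing $i<d=\dim R$ and a class $\xi\in H^i_{\frak m}(R^+)$, choose a module-finite $R'$ and $\xi'\in H^i_{\frak m}(R')$ mapping to $\xi$; by Theorem~\ref{T1.1} there is a module-finite $R''$ with $R'\subseteq R''\subseteq\overline{K}$ such that $H^i_{\frak m}(R')\to H^i_{\frak m}(R'')$ is zero, so $\xi'$ already dies in $H^i_{\frak m}(R'')$ and therefore $\xi=0$. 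Thus $H^i_{\frak m}(R^+)=0$ for all $i<\dim R$; the same argument yields this for \emph{every} local domain of positive characteristic that is an image of a Cohen--Macaulay local ring, which we use below.

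For part~(ii): a (balanced) big Cohen--Macaulay algebra is precisely an algebra on which every system of parameters of $R$ is a regular sequence, so fix an arbitrary system of parameters $x_1,\dots,x_d$; note $(x_1,\dots,x_d)R^+\ne R^+$, since $(x_1,\dots,x_d)R'\subseteq\frak m R'\subsetneq R'$ for every module-finite $R'$, by Nakayama. Argue by induction on $d$, the case $d=0$ being trivial. For $d\ge1$, $x_1$ is a nonzerodivisor on the domain $R^+$, and applying local cohomology to $0\to R^+\xrightarrow{\,x_1\,}R^+\to R^+/x_1R^+\to0$ together with part~(i) gives $H^j_{\frak m}(R^+/x_1R^+)=0$ for $j<d-1$; so it remains to show $x_2,\dots,x_d$ is a regular sequence on $R^+/x_1R^+$. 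Being an image of a Cohen--Macaulay local ring, $R$ is universally catenary, hence (as a domain) formally equidimensional, so every minimal prime $\frak p$ of $x_1R$ satisfies $\dim R/\frak p=d-1$, and $R/\frak p$ is again a local domain that is an image of a Cohen--Macaulay local ring. For a prime $\frak q$ of $R^+$ minimal over $x_1R^+$ with $\frak q\cap R=\frak p$, the domain $R^+/\frak q$ is absolutely integrally closed, integral over $R/\frak p$, with fraction field an algebraic closure of that of $R/\frak p$, so $R^+/\frak q\cong(R/\frak p)^+$; by part~(i) for $R/\frak p$ and the inductive hypothesis this is a big Cohen--Macaulay algebra over $R/\frak p$, so $x_2,\dots,x_d$ is a regular sequence on it and in particular $x_2\notin\frak q$. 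Hence any $\theta\in R^+$ with $x_2\theta\in x_1R^+$ lies in every minimal prime of $x_1R^+$, i.e.\ $\theta\in\sqrt{x_1R^+}$.

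The step that remains — upgrading $\theta\in\sqrt{x_1R^+}$ to $\theta\in x_1R^+$, and then iterating the whole argument with $x_1$ replaced by $x_1,x_2$, and so on — is where I expect the main difficulty to lie: because $R^+$ is not Noetherian, the vanishing obtained in part~(i) does not by itself control the associated primes of $R^+$ modulo a partial system of parameters. This is exactly where positive characteristic is essential: using that $R^+$ is closed under extraction of $p$-th roots and carries a natural Frobenius action, one removes the resulting nilpotent class of $\theta$ in $R^+/x_1R^+$ (which satisfies a Frobenius equation, being killed by a power of $F$) by the equation-lemma arguments of Huneke--Lyubeznik (\cite[Lemma~2.2]{HL07}; cf.\ \cite{HH92}, \cite{Sm94}), the same mechanism already used in the proof of Theorem~\ref{T1.1}.
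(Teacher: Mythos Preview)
Your argument for part~(i) is correct and is exactly the direct-limit argument the paper has in mind (the paper simply says ``similar to \cite[Corollary~2.3]{HL07}'', and that is precisely how Huneke--Lyubeznik prove their part~(a)).

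For part~(ii), however, your route diverges from the paper's and runs into a real gap that your proposed fix does not close. The paper, following \cite[Corollary~2.3(b)]{HL07}, does \emph{not} argue via minimal primes of $x_1R^+$ at all: it invokes a standard homological fact (cited there from Hochster's CBMS lectures) that deduces the regular-sequence statement directly from the vanishing in~(i) together with $\frak m R^+\neq R^+$. No further use of characteristic~$p$, Frobenius, or the equation lemma is made once~(i) is in hand.

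Your proposed fix via the equation lemma is misdirected in two ways. First, \cite[Lemma~2.2]{HL07} is a statement about classes in $H^i_{\frak m}(R')$ for a \emph{finite} extension $R'$: it produces a larger finite extension $R''$ in which the class dies. Here you are working with an element $\bar\theta\in R^+/x_1R^+$, not a local-cohomology class, and $R^+$ is already absolutely integrally closed, so there is no larger extension to pass to; the mechanism of the lemma has nothing to act on. Second, the specific assertion you need --- that a nilpotent $\bar\theta$ in $R^+/x_1R^+$ with $x_2\bar\theta=0$ must vanish --- is not a consequence of ``$\bar\theta$ is killed by a power of~$F$'': note that $R^+/x_1R^+$ is \emph{not} reduced (for instance $\overline{x_1^{1/p}}$ is a nonzero nilpotent), so ``killed by a power of~$F$'' by itself gives nothing. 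Your reduction to $\theta\in\sqrt{x_1R^+}$ therefore genuinely loses information, and the equation-lemma machinery cannot recover it. You should replace this part of the argument by the standard depth/local-cohomology argument that \cite{HL07} cites.
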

We close this paper with the following.
\begin{remark}\rm
 \begin{enumerate}[{(i)}]
\item In \cite{SS12}, A. Sannai and A.K. Singh showed that the finite extension in Huneke-Lyubeznik's result can be chosen as a generically Galois extension. It is not difficult to see that our method also works for Sannai-Singh's paper. Hence Theorem 1.3 (2) and Corollary 3.3 of \cite{SS12} hold true when the ring is an image of a Cohen-Macaulay local ring.
\item Since an excellent local ring is an image of a Cohen-Macaulay excellent local ring (\cite[Corollary 1.2]{K01}), Corollary 3.3 is a generalization of the original result of Hochster and Huneke in \cite{HH92} with a simpler proof. Thus our results cover all previous results. On the other hand, R.C. Heitmann constructed examples of universally catenarian local domains with the absolute integral closures are not Cohen-Macaulay (cf. \cite[Corollary 1.8]{He10}). Comparing with the result of Kawasaki (\cite[Corollary 1.2]{K01}), the condition that $R$ is an image of a Cohen-Macaulay local ring is seem to be the most general case for $R^+$ is big Cohen-Macaulay.
\end{enumerate}
\end{remark}

\begin{acknowledgement}
The author is grateful to Professor Kei-ichi Watanabe for his helpful recommendations. He is also grateful to Kazuma Shimomoto for his interest and many discussions.
\end{acknowledgement}

\end{document}